\pgfplotsset{compat=newest}
\tikzset
{
    treenode/.style = {circle, draw=black, align=center, minimum size=1cm},
    subtree/.style  = {isosceles triangle, draw=black, align=center, minimum height=0.5cm, minimum width=1cm, shape border rotate=90, anchor=north}
}
\newtheorem{theorem}{Theorem}
\newtheorem{corollary}[theorem]{Corollary}
\newtheorem{proposition}[theorem]{Proposition}
\newtheorem{lemma}[theorem]{Lemma}
\newtheorem{problem}[theorem]{Problem}
\newcommand{\lrf}[1]{\lfloor #1\rfloor}
\newcommand{\sgn}{{\rm sgn\,}}
\DeclareMathOperator{\N}{\mathbb{N}}
\title{A symmetric decomposition of the Boros-Moll polynomials}
\author{Guo-Niu Han}
\address{I.R.M.A., UMR 7501, Universit\'e de Strasbourg et CNRS, 7 rue Ren\'e Descartes, F-67084 Strasbourg, France}
\email{guoniu.han@unistra.fr (G.-N.~Han)}
\author[S.-M.~Ma]{Shi-Mei Ma}
\address{School of Mathematics and Statistics,
        Northeastern University at Qinhuangdao,
         Hebei 066000, P.R. China}
\email{shimeimapapers@163.com (S.-M. Ma)}
\author[Y.-N. Yeh]{Yeong-Nan Yeh}
\address{College of Mathematics and Physics, Wenzhou University, Wenzhou 325035, P.R. China}
\email{mayeh@math.sinica.edu.tw (Y.-N. Yeh)}
\subjclass[2020]{Primary 33C45; Secondary 05A10}
\begin{document}
\maketitle
\begin{abstract}
In their study of a quartic integral, Boros and Moll introduced a special
case of Jacobi polynomials, which are now known as the Boros-Moll polynomials.
In this paper, we study a symmetric decomposition of Boros-Moll polynomials. We discover that
both of the polynomials in the symmetric decomposition are alternatingly gamma-positive polynomials.
\bigskip

\noindent{\sl Keywords}: Boros-Moll polynomials; Symmetric decompositions; Recurrence systems
\end{abstract}
\date{\today}


\section{Introduction}\label{Section01}
Boros-Moll~\cite{Boros9901} discovered that for any $x\geqslant -1$ and $m\in\N$,
$$\int_0^{\infty}\frac{1}{(1+2xy^2+y^4)^{m+1}}\mathrm{d}y=\frac{\pi}{2^{m+3/2}(x+1)^{m+1/2}}P_m(x),$$
where $P_m(x)$ is now known as the {\it Boros-Moll polynomial}.
Using Ramanujan's Master Theorem, Boros-Moll~\cite{Boros2001} found that the coefficients of $P_m(x)$ are positive. In fact, they
showed that
$$P_m(x)=2^{-2m}\sum_k 2^k\binom{2m-2k}{m-k}\binom{m+k}{k}(x+1)^k,$$
which indicates that
$P_m(x)=\sum_{i=0}^md_i(m)x^i$,
where
$$d_i(m)=2^{-2m}\sum_{k=i}^m2^k\binom{2m-2k}{m-k}\binom{m+k}{k}\binom{k}{i}.$$
In~\cite{Boros9902}, they showed that the sequence $P_m(x)$ is unimodal and the model of it appears in the middle.
For example, $$P_5(x)=\frac{4389}{256}+\frac{8589}{128}x+\frac{7161}{64}x^2+\frac{777}{8}x^3+\frac{693}{16}x^4+\frac{63}{8}x^5.$$
By using the RISC package MultiSum, Kauers-Paule~\cite[Eq.~(6)]{Kausers07} found that for $0\leqslant i\leqslant m+1$,
the numbers $d_i(m)$ satisfy the recurrence relation
\begin{equation}\label{recu01}
2(m+1)d_i(m+1)=2(m+i)d_{i-1}(m)+(4m+2i+3)d_i(m).
\end{equation}

Much progress has been made since Boros and Moll proved the positivity of $d_i(m)$, see~\cite{Boros9902,Boros2001,Chen09,Chen10} and references therein.
In particular, based on the structure of reluctant functions introduced by
Mullin and Rota along with an extension of Foata's bijection between
Meixner endofunctions and bi-colored permutations, Chen-Pang-Qu~\cite{Chen10} found a combinatorial proof of the positivity of $d_i(m)$.
The purpose of this paper is to show that $P_m(x)$ is alternatingly bi-$\gamma$-positive.

The classical Jacobi polynomials $P_m^{(\alpha,\beta)}(x)$ can be defined by
$$P_m^{(\alpha,\beta)}(x)=\sum_{k=0}^m(-1)^{m-k}\binom{m+\beta}{m-k}\binom{m+k+\alpha+\beta}{k}\left(\frac{1+x}{2}\right)^k.$$
According to~\cite{Boros9902}, the Boros-Moll polynomial $P_m(x)$
can be viewed as the Jacobi polynomial $P_m^{(\alpha,\beta)}(x)$ with $\alpha=m+\frac{1}{2}$ and $\beta=-\left(m+\frac{1}{2}\right)$.
In the same way as discussed in Section~\ref{Section02}, one may give an answer to the following problem and we leave it for further research.
\begin{problem}
Under what conditions the polynomials $P_m^{(\alpha,\beta)}(x)$ are alternatingly bi-$\gamma$-positive?
\end{problem}

In the rest of this section, we collect some definitions.
Let $f(x)=\sum_{i=0}^nf_ix^i$ be a polynomial with nonnegative coefficients.
If $f(x)$ is symmetric with the centre of symmetry $\lrf{n/2}$, i.e., $f_i=f_{n-i}$ for all indices $0\leqslant i\leqslant n$,
then it can be expanded as
$$f(x)=\sum_{k=0}^{\lrf{{n}/{2}}}\gamma_kx^k(1+x)^{n-2k}.$$
Following Gal~\cite{Gal05}, the sequence $\{\gamma_k\}_{k=0}^{\lrf{n/2}}$ is called the {\it $\gamma$-vector} of $f(x)$.
If $\gamma_k\geqslant 0$ for all $0\leqslant k\leqslant \lrf{n/2}$, then $f(x)$ is said to be {\it $\gamma$-positive}.
Clearly, $\gamma$-positivity implies symmetry and unimodality.
We say that $f(x)$ is {\it alternatingly $\gamma$-positive} if its $\gamma$-vector alternates in sign.
For example, $1+x+x^2$ is alternatingly $\gamma$-positive, since $1+x+x^2=(1+x)^2-x$.
We now recall an elementary result.
\begin{proposition}[{\cite{Beck2010,Branden18}}]\label{prop01}
Let $f(x)$ be a polynomial of degree $n$.
There is a unique symmetric decomposition $f(x)= a(x)+xb(x)$, where
\begin{equation}\label{ax-bx-prop01}
a(x)=\frac{f(x)-x^{n+1}f(1/x)}{1-x},~b(x)=\frac{x^nf(1/x)-f(x)}{1-x}.
\end{equation}
When $f(0)\neq0$, we have $\deg a(x)=n$ and $\deg b(x)\leqslant n-1$.
\end{proposition}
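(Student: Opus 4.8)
The plan is to treat the explicit formulas in \eqref{ax-bx-prop01} as \emph{definitions} of $a(x)$ and $b(x)$, verify directly that they assemble into a symmetric decomposition of $f$, and then prove uniqueness by a short symmetry argument. Here I read ``symmetric'' in the graded sense adapted to the decomposition: $a(x)$ should be symmetric with centre $n/2$, meaning $x^na(1/x)=a(x)$, while $b(x)$ should be symmetric with centre $(n-1)/2$, meaning $x^{n-1}b(1/x)=b(x)$; on coefficients these are the palindromy conditions $a_i=a_{n-i}$ and $b_i=b_{n-1-i}$. Pinning down these two centres correctly is the only genuinely delicate point, since it is what makes both existence and uniqueness fall out cleanly.

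For existence I would proceed in three short steps. First, substituting the two formulas and clearing the common denominator gives
\begin{align*}
a(x)+xb(x)
&=\frac{f(x)-x^{n+1}f(1/x)+x\bigl(x^nf(1/x)-f(x)\bigr)}{1-x}\\
&=\frac{(1-x)f(x)}{1-x}=f(x),
\end{align*}
so the identity $f=a+xb$ holds as rational functions. Second, I must check that $a$ and $b$ are honest polynomials: evaluating the numerator of $a$ at $x=1$ yields $f(1)-f(1)=0$, and likewise the numerator of $b$ vanishes at $x=1$, so $1-x$ divides both numerators and $a,b$ are polynomials. Third, to confirm the symmetry I substitute $x\mapsto 1/x$ and rescale:
$$x^na(1/x)=x^n\cdot\frac{f(1/x)-x^{-(n+1)}f(x)}{1-1/x}=\frac{x^{n+1}f(1/x)-f(x)}{x-1}=a(x),$$
and the parallel computation gives $x^{n-1}b(1/x)=b(x)$.

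For the degree claims, assume $f(0)\ne0$ and write $f(x)=\sum_{i=0}^nf_ix^i$ with $f_0,f_n\ne0$. The numerator of $a$ equals $\sum_{i=0}^nf_ix^i-\sum_{i=0}^nf_ix^{n+1-i}$, whose leading term is $-f_0x^{n+1}$; dividing by the degree-one polynomial $1-x$ leaves $\deg a=n$. The numerator of $b$ is a difference of two polynomials of degree at most $n$, so after division by $1-x$ one gets $\deg b\le n-1$.

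Finally, for uniqueness, suppose $f=a+xb=\tilde a+x\tilde b$ are two symmetric decompositions, and set $p=a-\tilde a$ and $q=\tilde b-b$, so that $p(x)=xq(x)$ with $x^np(1/x)=p(x)$ and $x^{n-1}q(1/x)=q(x)$. Then
$$q(x)=x^{n-1}q(1/x)=x^n\cdot\tfrac1xq(1/x)=x^np(1/x)=p(x)=xq(x),$$
whence $(1-x)q(x)=0$, forcing $q\equiv0$ and then $p\equiv0$. Thus any symmetric decomposition coincides with the one produced by \eqref{ax-bx-prop01}, which settles both existence and uniqueness. I do not anticipate a serious obstacle beyond the bookkeeping of the two symmetry centres; once that convention is fixed, the whole argument is elementary algebra.
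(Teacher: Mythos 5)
Your argument is correct and complete. Note that the paper itself offers no proof of Proposition~\ref{prop01}; it is quoted from Beck--Stapledon and Br\"and\'en--Solus, so there is nothing to compare against beyond the literature. Your verification covers all the points that need checking: the telescoping sum $a(x)+xb(x)=f(x)$, the vanishing of both numerators at $x=1$ (which makes $a$ and $b$ genuine polynomials), the two palindromy identities $x^na(1/x)=a(x)$ and $x^{n-1}b(1/x)=b(x)$, the degree count via the leading term $-f_0x^{n+1}$, and the uniqueness argument $q(x)=x^{n}p(1/x)=p(x)=xq(x)$. You are also right that the only delicate point is fixing the two centres of symmetry at $n/2$ and $(n-1)/2$; this convention is exactly the one the paper uses later when it writes $a_m(x)=\sum_k\alpha_{m,k}x^k(1+x)^{m-2k}$ and $b_m(x)=\sum_k\beta_{m,k}x^k(1+x)^{m-1-2k}$, so your reading is consistent with how the proposition is applied.
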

We call the ordered pair of polynomials $(a(x),b(x))$ the {\it symmetric decomposition} of $f(x)$, since $a(x)$ and $b(x)$ are both symmetric polynomials.
Let $(a(x),b(x))$ be the symmetric decomposition of $f(x)$.
We say that $f(x)$ is {\it alternatingly bi-$\gamma$-positive} if both $a(x)$ and $b(x)$ are alternatingly $\gamma$-positive.
Very recently, the symmetric decompositions of various polynomials have been studied in geometric and enumerative combinatorics, see~\cite{Athanasiadis20,Branden18} and references therein.

The polynomial $f(x)$ is said to be {\it unimodal} if
$f_0\leqslant f_1\leqslant \cdots\leqslant f_k\geqslant f_{k+1}\geqslant\cdots \geqslant f_n$ for some $k$, where the index $k$ is called the {\it mode} of $f(x)$.
The polynomial $f(x)$ is {\it spiral} if
$$f_n\leqslant f_0\leqslant f_{n-1}\leqslant f_1\leqslant \cdots\leqslant f_{\lrf{n/2}}.$$
Following~\cite[Definition 2.9]{Schepers13}, the polynomial $f(x)$ is {\it alternatingly increasing} if
$$f_0\leqslant f_n\leqslant f_1\leqslant f_{n-1}\leqslant\cdots \leqslant f_{\lrf{{(n+1)}/{2}}}.$$
If $f(x)$ is spiral and $\deg f(x)=n$, then $x^nf(1/x)$ is alternatingly increasing, and vice versa.
\begin{lemma}[{\cite[Lemma~2.1]{Beck2019}}]\label{lemma-alt}
Let $(a(x),b(x))$ be the symmetric decomposition of $f(x)$, where $\deg f(x)=\deg a(x)=n$ and $\deg b(x)=n-1$.
Then $f(x)$ is alternatingly increasing if and only if both $a(x)$ and $b(x)$ are unimodal.
\end{lemma}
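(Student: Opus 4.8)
The plan is to pass to coefficients and observe that the single chain of inequalities defining ``alternatingly increasing'' for $f$ decouples, increment by increment, into the two separate monotonicity chains that define unimodality of $a$ and of $b$. Write $a(x)=\sum_{i=0}^{n}a_ix^i$ and $b(x)=\sum_{i=0}^{n-1}b_ix^i$, and set $b_{-1}=0$. By Proposition~\ref{prop01} both $a(x)$ and $b(x)$ are symmetric, so $a_i=a_{n-i}$ and $b_i=b_{n-1-i}$; comparing coefficients in $f(x)=a(x)+xb(x)$ gives $f_k=a_k+b_{k-1}$ for all $k$.

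The key computation I would carry out is to use the two symmetries to rewrite the ``upper'' coefficients through small indices: for $0\le k\le\lfloor n/2\rfloor$,
$$f_k=a_k+b_{k-1},\qquad f_{n-k}=a_{n-k}+b_{n-1-k}=a_k+b_k,$$
the last equality being $a_{n-k}=a_k$ and $b_{n-1-k}=b_k$. This exhibits the two interleaved families $f_0,f_1,\dots$ and $f_n,f_{n-1},\dots$ occurring in the alternatingly increasing chain entirely in terms of $a_0,a_1,\dots$ and $b_0,b_1,\dots$.

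Next I would read off the consecutive increments along $f_0\le f_n\le f_1\le f_{n-1}\le\cdots\le f_{\lfloor(n+1)/2\rfloor}$. From the two formulas they alternate as
$$f_{n-k}-f_k=b_k-b_{k-1},\qquad f_{k+1}-f_{n-k}=a_{k+1}-a_k,$$
so that, starting from $f_n-f_0=b_0$, the entire chain of inequalities is equivalent to the conjunction of $a_0\le a_1\le\cdots\le a_{\lfloor n/2\rfloor}$, of $b_0\le b_1\le\cdots\le b_{\lfloor(n-1)/2\rfloor}$, and of $b_0\ge0$. By the symmetry $a_i=a_{n-i}$ the first chain is exactly the unimodality of $a(x)$, and by $b_i=b_{n-1-i}$ the second is exactly the unimodality of $b(x)$; reading the equivalence in both directions proves the lemma.

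The one place demanding care is the bookkeeping at the top of the chain, which I would settle by separating the parities of $n$. For $n=2m$ the chain ends at the central coefficient $f_m=a_m+b_{m-1}$ and its final increment is $a_m-a_{m-1}$, the last inequality needed for unimodality of $a$ at its centre $m$; for $n=2m+1$ it ends at $f_{m+1}=a_m+b_m$ with final increment $b_m-b_{m-1}$, the last inequality needed for unimodality of $b$ at its centre $m$. I expect this endpoint matching to be the only genuinely delicate step: one must verify that exactly the right inequalities appear, none omitted or repeated, and that the lone increment $b_0\ge0$, the only one not of the form $a_{k+1}-a_k$ or $b_k-b_{k-1}$, is guaranteed by the standing nonnegativity of the coefficients of the symmetric decomposition.
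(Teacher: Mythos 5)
The paper offers no proof of this lemma---it is quoted verbatim from \cite{Beck2019}---so there is nothing internal to compare against; your coefficient-by-coefficient argument is the standard one. All of your identities check out: $f_k=a_k+b_{k-1}$, $f_{n-k}=a_k+b_k$, the two increment formulas $f_{n-k}-f_k=b_k-b_{k-1}$ and $f_{k+1}-f_{n-k}=a_{k+1}-a_k$, and the endpoint bookkeeping in both parities (final increment $a_m-a_{m-1}$ for $n=2m$, final increment $b_m-b_{m-1}$ for $n=2m+1$) are all correct, and the observation that a symmetric polynomial is unimodal iff it is weakly increasing up to its centre makes the translation to unimodality of $a$ and $b$ legitimate.

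The one genuine problem is exactly the step you isolate at the end and then wave away. The increment $f_n-f_0=b_0$ is \emph{not} ``guaranteed by the standing nonnegativity of the coefficients of the symmetric decomposition'': no such standing assumption appears in the statement, and unimodality as defined in this paper carries no sign condition. So the ``if'' direction of your proof does not close, and in fact the lemma as quoted is literally false: for $f(x)=2+x$ one has $a(x)=2(1+x)$ and $b(x)=-1$, both symmetric and unimodal with $\deg b=n-1=0$, yet $f_0>f_1$, so $f$ is not alternatingly increasing. The correct statement requires in addition that $b$ have nonnegative coefficients; since a symmetric unimodal $b$ attains its minimum at $b_0$, this is precisely the single inequality $b_0\geqslant 0$ that your chain produces and that unimodality alone cannot supply. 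To repair the proof, either add that hypothesis explicitly, or note that it holds in the only place the lemma is used here: by the spiral property of $P_m$ one has $f_0=d_m(m)\leqslant d_0(m)=f_m$ for $f(x)=x^mP_m(1/x)$, i.e.\ $b_0\geqslant 0$.
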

In~\cite[Corollary~1.2]{Chen09}, Chen-Xia found that $P_m(x)$ is spiral, then $x^mP_m\left(\frac{1}{x}\right)$ is alternatingly increasing.
By Lemma~\ref{lemma-alt}, one can get the following result.
\begin{proposition}
For any $m\geqslant 1$, let $(p_m(x),q_m(x))$ be the symmetric decomposition of $x^mP_m\left(\frac{1}{x}\right)$.
Then $p_m(x)$ and $q_m(x)$ are both unimodal.
\end{proposition}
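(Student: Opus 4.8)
The plan is to derive the statement directly from three ingredients already in place: the spirality of $P_m(x)$ established by Chen--Xia in \cite[Corollary~1.2]{Chen09}, the equivalence (recorded just above Lemma~\ref{lemma-alt}) between the spirality of a polynomial $f$ and the alternatingly increasing property of $x^{\deg f}f(1/x)$, and Lemma~\ref{lemma-alt} itself. Setting $g(x):=x^mP_m(1/x)$, spirality of $P_m(x)$ immediately makes $g(x)$ alternatingly increasing, so everything reduces to checking that $g(x)$ meets the degree hypotheses of Lemma~\ref{lemma-alt}; once that is done, the lemma converts the alternatingly increasing property of $g(x)$ into the unimodality of both $p_m(x)$ and $q_m(x)$.

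The degree bookkeeping is where the only real work lies. Since $g(x)=\sum_{i=0}^m d_i(m)x^{m-i}$, its constant term is $d_m(m)$ and its leading coefficient is $d_0(m)$. From the explicit formula for $d_i(m)$ one reads off $d_m(m)=2^{-m}\binom{2m}{m}>0$, and isolating the $k=m$ term in $d_0(m)$ gives
\begin{equation*}
d_0(m)=d_m(m)+2^{-2m}\sum_{k=0}^{m-1}2^k\binom{2m-2k}{m-k}\binom{m+k}{k}.
\end{equation*}
For $m\geqslant 1$ the trailing sum is strictly positive, whence $d_0(m)>d_m(m)>0$. In particular $g(0)=d_m(m)\neq 0$, so Proposition~\ref{prop01} (with $n=m$) yields $\deg p_m(x)=m$ and $\deg q_m(x)\leqslant m-1$; and since by \eqref{ax-bx-prop01} the coefficient of $x^{m-1}$ in $q_m(x)=(x^mg(1/x)-g(x))/(1-x)$ equals $d_0(m)-d_m(m)>0$, in fact $\deg q_m(x)=m-1$.

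At this point $g(x)$ satisfies every hypothesis of Lemma~\ref{lemma-alt} with $n=m$, and because $g(x)$ is alternatingly increasing the lemma forces both $p_m(x)$ and $q_m(x)$ to be unimodal, which is the desired conclusion. I expect the only step requiring attention to be the verification that $\deg q_m(x)=m-1$, equivalently the strict inequality $d_0(m)>d_m(m)$; as shown above this is transparent from the closed form for $d_i(m)$, so I anticipate no real obstacle.
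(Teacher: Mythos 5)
Your proposal is correct and follows exactly the route the paper itself takes (spirality of $P_m(x)$ from Chen--Xia, the noted equivalence with the alternatingly increasing property of $x^mP_m(1/x)$, and Lemma~\ref{lemma-alt}); the only addition is your explicit verification that $\deg q_m(x)=m-1$ via $d_0(m)>d_m(m)>0$, a hypothesis of the lemma that the paper leaves unchecked, and that computation is accurate.
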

\section{Main results}\label{Section01}
Define
$$Q_m(x)=2^mm!x^mP_m\left(\frac{1}{x}\right)=\sum_{i=0}^mc_i(m)x^i.$$
Note that
$c_i(m)=2^mm!d_{m-i}(m)$. It follows from~\eqref{recu01} that
\begin{equation}\label{cim}
c_i(m+1)=(4m-2i+2)c_i(m)+(6m-2i+5)c_{i-1}(m),
\end{equation}
with the initial conditions $c_0(0)=1$ and $c_i(0)=0$ for all $i\neq 0$.
By~\eqref{cim}, we obtain
\begin{equation}\label{Qmx-recu}
Q_{m+1}(x)=(2m+1)(2+3x)Q_m(x)-2x(1+x)\frac{\mathrm{d}}{\mathrm{d}x}Q_m(x),~Q_0(x)=1.
\end{equation}
Clearly, $Q_m(-1)=(-1)^m(2m-1)!!$ and $Q_m(0)=(2m)!/m!$.
It should be noted that $Q_m(0)$ counts binary rooted plane trees, with $n$ labeled end nodes of degree $1$, see~\cite[A001813]{Sloane}.
Following~\cite[A334907]{Sloane}, one has
\begin{equation*}
Q_m(1)=2^mm!P_m(1)=\frac{m!}{2^{m+1}}\binom{4m+2}{2m+1}.
\end{equation*}

Below are the symmetric decomposition of the polynomials $Q_m(x)$ for $m\leqslant 4$:
\begin{align*}
Q_1(x)&=2(1+x)+x,~Q_2(x)=3(4+7x+4x^2)+9x(1+x),\\
Q_3(x)&=3(40+103x+103x^2+40 x^3)+3x(37+69x+37 x^2),\\
Q_4(x)&=105(16+55x+79x^2+55x^3+16 x^4)+255x(1+x)(7+12x+7x^2),\\
Q_5(x)
&=315(96+415x+781x^2+781x^3+415x^4+96x^5)+\\
&\quad 315x(113+403x+583x^2+403x^3+113x^4).
\end{align*}

\begin{theorem}\label{thm02}
The Boros-Moll polynomials are alternatingly bi-$\gamma$-positive. More precisely,
let $(a_m(x),b_m(x))$ be the symmetric decomposition of $Q_m(x)$.
Then
\begin{align*}
a_m(x) &=\sum_{k=0}^{\lrf{m/2}} \alpha_{m,k} x^k (1+x)^{m-2k},~
b_m(x) =\sum_{k=0}^{\lrf{(m-1)/2}} \beta_{m,k}  x^k (1+x)^{m-1-2k},
\end{align*}
where both $a_m(x)$ and $b_m(x)$ are alternatingly $\gamma$-positive.
Thus $P_m(x)$ has the expansion:
$$P_m(x)=\frac{1}{2^mm!}\left(\sum_{k=0}^{\lrf{m/2}} \alpha_{m,k} x^k (1+x)^{m-2k}+\sum_{k=0}^{\lrf{(m-1)/2}} \beta_{m,k}  x^k (1+x)^{m-1-2k}\right).$$
\end{theorem}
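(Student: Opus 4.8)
The plan is to recast alternating bi-$\gamma$-positivity as a sign-alternation statement for two sequences of coefficients and then to prove that statement by induction on $m$ through an explicit recurrence system (this is where the ``recurrence systems'' of the title enter). By Proposition~\ref{prop01} together with $Q_m(0)=(2m)!/m!\neq0$, the decomposition $Q_m=a_m+xb_m$ has $\deg a_m=m$ and $\deg b_m\leqslant m-1$, and both $a_m$ and $b_m$ are symmetric, hence admit the $\gamma$-expansions written in the theorem. So the entire content is to establish $(-1)^k\alpha_{m,k}\geqslant0$ and $(-1)^k\beta_{m,k}\geqslant0$ for all $k$.

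The key device is the reversal $R_m(x):=x^mQ_m(1/x)$. Since $Q_m(x)=2^mm!\,x^mP_m(1/x)$, one computes $R_m(x)=2^mm!\,P_m(x)$, so the reversal is again a Boros--Moll object. Moreover, palindromicity of $a_m,b_m$ gives the two clean linear relations $Q_m=a_m+xb_m$ and $R_m=a_m+b_m$, equivalently $a_m=(Q_m-xR_m)/(1-x)$ and $b_m=(R_m-Q_m)/(1-x)$. First I would push the recurrence~\eqref{Qmx-recu} through the reversal $x\mapsto1/x$ (with the appropriate power of $x$) to obtain the companion recurrence
\begin{equation*}
R_{m+1}(x)=\bigl(2(m+1)x+4m+3\bigr)R_m(x)+2x(1+x)R_m'(x),\qquad R_0(x)=1 .
\end{equation*}

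Next I would substitute $Q_m=a_m+xb_m$ and $R_m=a_m+b_m$ into the two differential recurrences, obtaining a linear system for $a_{m+1}+xb_{m+1}$ and $a_{m+1}+b_{m+1}$ in terms of $a_m,b_m,a_m',b_m'$; subtracting expresses $(x-1)b_{m+1}$ as an explicit polynomial combination of $a_m,b_m$ and their derivatives. The division by $x-1$ is legitimate because that combination vanishes at $x=1$ --- precisely the palindrome identity $p'(1)=\tfrac12(\deg p)\,p(1)$ applied to $a_m$ and $b_m$. To reach the $\gamma$-coefficients I would then change to the symmetric coordinate $z=x/(1+x)^2$, which is invariant under $x\mapsto 1/x$ and therefore adapted to palindromes: writing $a_m(x)=(1+x)^mA_m(z)$ and $b_m(x)=(1+x)^{m-1}B_m(z)$ turns $x^k(1+x)^{m-2k}=(1+x)^mz^k$ into $A_m(z)=\sum_k\alpha_{m,k}z^k$ and $B_m(z)=\sum_k\beta_{m,k}z^k$. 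Using $\tfrac{d}{dx}=\tfrac{1-x}{(1+x)^3}\tfrac{d}{dz}$, the system for $(a_m,b_m)$ becomes a coupled recurrence system for $A_m(z),B_m(z)$, and reading off coefficients produces explicit recurrences for $\alpha_{m,k},\beta_{m,k}$ involving only the neighbours $\alpha_{m,k},\alpha_{m,k-1},\beta_{m,k},\beta_{m,k-1}$.

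The remaining step, and the hard part, is the induction: one must check that the recurrence system preserves the sign pattern $(-1)^k\alpha_{m,k}\geqslant0$ and $(-1)^k\beta_{m,k}\geqslant0$. The obstruction is that the differential term couples $z^k$ and $z^{k-1}$ with coefficients of opposite sign, so after the twist $\alpha_{m,k}\mapsto(-1)^k\alpha_{m,k}$ some contributions become subtractions, and mere nonnegativity of the inputs does not suffice. I expect that closing the induction will require strengthening the hypothesis --- for instance by tracking comparisons between consecutive coefficients $|\alpha_{m,k}|$ and $|\alpha_{m,k-1}|$ (and likewise for $\beta$), or a cross-inequality relating the $\alpha$- and $\beta$-sequences at matching indices --- so that every subtracted term is dominated. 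Identifying such an inductive invariant that is stable under the system is the crux; the base cases $m\leqslant2$ are read off directly from the tabulated data. Once this is done, the closing formula for $P_m(x)$ is immediate: from $P_m(x)=R_m(x)/(2^mm!)=(a_m(x)+b_m(x))/(2^mm!)$, substituting the $\gamma$-expansions of $a_m$ and $b_m$ yields exactly the displayed expansion.
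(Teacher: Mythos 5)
Your setup is exactly the paper's: the reversal $\widetilde{Q}_m(x)=x^mQ_m(1/x)$, the pair of relations $Q_m=a_m+xb_m$ and $\widetilde{Q}_m=a_m+b_m$, the companion differential recurrence, and the extraction of a coupled recurrence for the $\gamma$-coefficients. But you stop short of the theorem at precisely the point where the proof lives, and your diagnosis of that point is wrong. The recurrence system one obtains (Lemma~\ref{lemma:alphabeta}) reads, away from the boundary cases,
\begin{align*}
\alpha_{m+1,k}&=2(2m+1-k)\,\alpha_{m,k}-(4k-1)\,\beta_{m,k-1},\\
\beta_{m+1,k}&=(1+4k)\,\alpha_{m,k}+(4m+2k+3)\,\beta_{m,k}.
\end{align*}
Under the induction hypothesis $\sgn\alpha_{m,k}=\sgn\beta_{m,k}=(-1)^k$, the term $-(4k-1)\beta_{m,k-1}$ has sign $-(-1)^{k-1}=(-1)^k$: it \emph{reinforces} the sign of $2(2m+1-k)\alpha_{m,k}$ rather than fighting it, and likewise both terms in the $\beta$-recurrence carry the sign $(-1)^k$. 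So mere sign-alternation of the inputs closes the induction with no cancellation to control. The only genuine subtraction in the system, the $-2k$ produced by the differential term against $4m+2$, is absorbed because $k\leqslant\lrf{m/2}$ forces $2(2m+1-k)>0$; no comparison of consecutive coefficient magnitudes and no cross-inequality between the $\alpha$- and $\beta$-sequences is needed. By asserting that ``mere nonnegativity of the inputs does not suffice'' and deferring the identification of a stronger invariant, you have left the theorem unproved: the entire content of the statement is this sign-propagation step, and it must actually be carried out --- including the parity-dependent boundary terms such as $\alpha_{m+1,t+1}=-(3+4t)\beta_{m,t}$ and $\beta_{m+1,t}=(1+4t)\alpha_{m,t}$, which also propagate the correct signs. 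Once the recurrences are written down explicitly, the induction is immediate; what is missing from your proposal is exactly that computation and the observation that it closes without any strengthening.
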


Define
\begin{align*}
\alpha_m(x) &=\sum_{k=0}^{\lrf{m/2}} (-1)^k\alpha_{m,k} x^k,~
\beta_m(x) =\sum_{k=0}^{\lrf{(m-1)/2}} (-1)^k\beta_{m,k} x^k.
\end{align*}
Then
\begin{align*}
a_m(x) &=(1+x)^m\alpha_m\left(\frac{-x}{(1+x)^2}\right),~b_m(x)=(1+x)^{m-1}\beta_m\left(\frac{-x}{(1+x)^2}\right).
\end{align*}
By Lemma~\ref{lemma:alphabeta}, it is routine to deduce the following result.
\begin{corollary}
The polynomials $\alpha_m(x)$ and $\beta_m(x)$ satisfy the recurrence system:
\begin{equation}\label{LnELnO}
\left\{
  \begin{array}{l}
\alpha_{m+1}(x)=(4m+2)\alpha_m(x)-2x\frac{\mathrm{d}}{\mathrm{d}x}\alpha_m(x)+3x\beta_m(x)+4x^2\frac{\mathrm{d}}{\mathrm{d}x}\beta_m(x), \\
\beta_{m+1}(x)=\alpha_m(x)+4x\frac{\mathrm{d}}{\mathrm{d}x}\alpha_m(x)+(4m+3)\beta_m(x)+2x\frac{\mathrm{d}}{\mathrm{d}x}\beta_m(x), \\
  \end{array}
\right.
\end{equation}
with the initial conditions $\alpha_0(x)=1$ and $\beta_0(x)=0$.
\end{corollary}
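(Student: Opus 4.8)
The plan is to realize $\alpha_m$ and $\beta_m$ through the symmetric constituents $a_m,b_m$ and to push the recurrence \eqref{Qmx-recu} through the $\gamma$-substitution $t=-x/(1+x)^2$. Write $R_m(x):=x^mQ_m(1/x)$ for the reciprocal polynomial. Since $a_m$ is palindromic of degree $m$ and $b_m$ of degree $m-1$, applying $x^m(\,\cdot\,)(1/x)$ to $Q_m=a_m+xb_m$ gives $R_m=a_m+b_m$. Hence
$$b_m=\frac{Q_m-R_m}{x-1},\qquad a_m=R_m-b_m,$$
and likewise $b_{m+1}=(Q_{m+1}-R_{m+1})/(x-1)$, $a_{m+1}=R_{m+1}-b_{m+1}$. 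So everything reduces to the joint recurrence of $Q_m$ and $R_m$, which is the content I would isolate as Lemma~\ref{lemma:alphabeta}.

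First I would derive the recurrence for $R_m$. Substituting $x\mapsto1/x$ in \eqref{Qmx-recu}, using $Q_m(1/x)=x^{-m}R_m$ and $Q_m'(1/x)=mx^{1-m}R_m-x^{2-m}R_m'$, and multiplying through by $x^{m+1}$ so that the negative powers of $x$ clear, one gets
$$R_{m+1}(x)=\bigl[(2m+2)x+(4m+3)\bigr]R_m(x)+2x(1+x)R_m'(x).$$
Next I would substitute $Q_m=a_m+xb_m$, $R_m=a_m+b_m$ (with their derivatives) into $Q_{m+1}-R_{m+1}$ and into $R_{m+1}$, and then replace $a_m,b_m$ by the relations $a_m=(1+x)^m\alpha_m(t)$, $b_m=(1+x)^{m-1}\beta_m(t)$. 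Since $t'=(x-1)/(1+x)^3$, the chain rule gives
$$a_m'=m(1+x)^{m-1}\alpha_m+(x-1)(1+x)^{m-3}\alpha_m',\qquad b_m'=(m-1)(1+x)^{m-2}\beta_m+(x-1)(1+x)^{m-4}\beta_m',$$
where $\alpha_m',\beta_m'$ denote derivatives in $t$.

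The decisive simplifications occur when collecting terms. In $Q_{m+1}-R_{m+1}$ the coefficient of $\alpha_m$ collapses, via $(4m+1)x-1-4mx=x-1$, to $(x-1)(1+x)^m$; the coefficient of $\beta_m$ collapses, via $(6m+1)x^2+(2m-2)x-(4m+3)-2(m-1)x(1+x)=(4m+3)(x^2-1)$, to $(4m+3)(x-1)(1+x)^m$; and both derivative terms carry the common factor $x(x-1)$. Thus $(x-1)$ factors out exactly, and dividing by $(x-1)(1+x)^m$ while using $x(1+x)^{-2}=-t$ yields $\beta_{m+1}(t)=\alpha_m+4t\alpha_m'+(4m+3)\beta_m+2t\beta_m'$. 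The analogous reduction of $R_{m+1}-(1+x)^m\beta_{m+1}$, divided by $(1+x)^{m+1}$, gives $\alpha_{m+1}(t)=(4m+2)\alpha_m-2t\alpha_m'+3t\beta_m+4t^2\beta_m'$; renaming $t$ as $x$ produces \eqref{LnELnO}. The main obstacle is precisely verifying that these $(x-1)$ and $(1+x)$ factors divide out cleanly: the palindromicity of $a_m,b_m$ (equivalently, the two algebraic identities above, which are what force the cancellation) is exactly what keeps the quotients polynomial. The initial data are immediate, since $Q_0=1$ gives $a_0=1$, $b_0=0$, whence $\alpha_0=1$ and $\beta_0=0$.
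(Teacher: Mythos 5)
Your proposal is correct; I checked the key cancellations (the coefficient of $\alpha_m$ in $Q_{m+1}-R_{m+1}$ reducing to $(x-1)(1+x)^m$, that of $\beta_m$ to $(4m+3)(x-1)(1+x)^m$, and the four coefficients in $R_{m+1}-(1+x)^m\beta_{m+1}$ reducing to $(4m+2)$, $-2t$, $3t$, $4t^2$ after division by $(1+x)^{m+1}$), and they all hold. Your route shares its first half with the paper: you derive exactly the paper's recurrence \eqref{Qmx-recu02} for $\widetilde{Q}_m(x)=x^mQ_m(1/x)$ and use $\widetilde{Q}_m=a_m+b_m$, which is the content of Lemma~\ref{lemma:aabb}. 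Where you diverge is the second half. The paper passes through Lemma~\ref{lemma:alphabeta}, i.e., it expands $a_m$ and $b_m$ in the bases $x^k(1+x)^{m-2k}$ and $x^k(1+x)^{m-1-2k}$, compares coefficients term by term (with a separate treatment of the parity of $m$ and of the boundary indices $k=t$, $k=t+1$), and only then reassembles the coefficient recurrences into \eqref{LnELnO}. You instead substitute $a_m=(1+x)^m\alpha_m(t)$, $b_m=(1+x)^{m-1}\beta_m(t)$ with $t=-x/(1+x)^2$ and push everything through the chain rule, obtaining the polynomial recurrence in one stroke with no parity split; the incidental mislabeling of what Lemma~\ref{lemma:alphabeta} contains does not affect anything. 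Your version is arguably cleaner for proving the corollary itself, while the paper's coefficient-level lemma is not wasted effort there: the explicit recurrences for $\alpha_{m,k}$ and $\beta_{m,k}$ are what the sign-alternation argument in the proof of Theorem~\ref{thm02} actually runs on, so the paper needs them anyway. One small point worth making explicit in your write-up is that the substitution $a_m(x)=(1+x)^m\alpha_m(-x/(1+x)^2)$ is legitimate because Proposition~\ref{prop01} guarantees $a_m$ and $b_m$ are symmetric with centers $m/2$ and $(m-1)/2$, hence lie in the span of the respective $\gamma$-bases; you invoke palindromicity for $R_m=a_m+b_m$ but should also cite it here.
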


If the coefficients of $\alpha_m(x)$ are positive, then so does $(4m+2)\alpha_m(x)-2x\frac{\mathrm{d}}{\mathrm{d}x}\alpha_m(x)$.
Using~\eqref{LnELnO} and by induction on $m$, we see that both $\alpha_m(x)$ and $\beta_m(x)$ have only positive coefficients.
For convenience, we list these polynomials for $m\leqslant 5$:
\begin{align*}
\alpha_1(x)&=2,~\beta_1(x)=1,~
\alpha_2(x)=12+3x,~\beta_2(x)=9,~
\alpha_3(x)=120+51x,\\
\beta_3(x)&=111+15x,~
\alpha_4(x)=1680+945x+105x^2,~\beta_4(x)=1785 + 510 x,\\
\alpha_5(x)&=30240+20475x+5040x^2,~\beta_5(x)=35595 + 15435 x + 945 x^2.
\end{align*}
In particular, $\alpha_{m,0}=(2m)!/m!$ and the numbers $\beta_{m,0}$ appear as the second column of the triangle~\cite[A000369]{Sloane}.
\section{Proof of Theorem~\ref{thm02}}\label{Section02}
Applying the formula~\eqref{ax-bx-prop01}, we see that $Q_m(x)=a_m(x)+xb_m(x)$, where
\begin{align*}
a_m(x)=\frac{Q_m(x)-x^{m+1}Q_m(1/x)}{1-x},~b_m(x)=\frac{Q_m(x)-x^m Q_m(1/x)}{x-1}.
\end{align*}

\begin{lemma}\label{lemma:aabb}
For any $m\geqslant 0$, we have
\begin{align*}
(1-x) a_{m+1}(x)
&=-2(1+x)(mx-2m+x-1) a_m(x)-2x(1+x)^2 \frac{\mathrm{d}}{\mathrm{d}x}a_m(x)\\
& \quad +x(4mx-x-3) b_m(x)-4x^2(1+x) \frac{\mathrm{d}}{\mathrm{d}x}b_m(x),\\
(x-1) b_{m+1}(x)
&=(4mx+x-1) a_m(x)-4x(1+x) \frac{\mathrm{d}}{\mathrm{d}x}a_m(x) \\
& \quad  + (6mx-4m+x-3)(x+1) b_m(x)- 2x(1+x)^2 \frac{\mathrm{d}}{\mathrm{d}x}b_m(x).
\end{align*}
\end{lemma}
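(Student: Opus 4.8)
The plan is to introduce the reverse polynomial $\widehat{Q}_m(x):=x^mQ_m(1/x)$ and to push everything through the two defining formulas of the symmetric decomposition. Since $x^{m+1}Q_m(1/x)=x\widehat{Q}_m(x)$ and $x^mQ_m(1/x)=\widehat{Q}_m(x)$, the formulas for $a_m,b_m$ become
\[
(1-x)\,a_m = Q_m - x\widehat{Q}_m,\qquad (x-1)\,b_m = Q_m - \widehat{Q}_m .
\]
In particular, reading these at index $m+1$ already expresses $a_{m+1}$ and $b_{m+1}$ through $Q_{m+1}$ and $\widehat{Q}_{m+1}$, so it suffices to write $Q_{m+1}$ and $\widehat{Q}_{m+1}$ in terms of $a_m,b_m$ and their derivatives. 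For the substitution step I also record the two relations in the other direction: by definition $Q_m=a_m+xb_m$, and adding the two displayed equations gives $(1-x)(a_m+b_m)=(1-x)\widehat{Q}_m$, hence $\widehat{Q}_m=a_m+b_m$ (consistent with $a_m,b_m$ being symmetric of degrees $m$ and $m-1$).

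Next I would produce a recurrence for $\widehat{Q}_m$ companion to \eqref{Qmx-recu}. The only nonmechanical point is how $\tfrac{\mathrm{d}}{\mathrm{d}x}$ transforms under reversal: differentiating $\widehat{Q}_m(x)=x^mQ_m(1/x)$ yields the identity $x^mQ_m'(1/x)=mx\widehat{Q}_m(x)-x^2\widehat{Q}_m'(x)$. Substituting $x\mapsto 1/x$ in \eqref{Qmx-recu}, multiplying by $x^{m+1}$, and inserting this identity gives
\[
\widehat{Q}_{m+1}(x)=(2mx+2x+4m+3)\,\widehat{Q}_m(x)+2x(1+x)\widehat{Q}_m'(x),
\]
which I would cross-check on $\widehat{Q}_0=1$, $\widehat{Q}_1=2x+3$, and $\widehat{Q}_2=21+30x+12x^2$.

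Finally I would substitute $Q_m=a_m+xb_m$ and $\widehat{Q}_m=a_m+b_m$ into the recurrences for $Q_{m+1}$ and $\widehat{Q}_{m+1}$, expand $\tfrac{\mathrm{d}}{\mathrm{d}x}(a_m+xb_m)=a_m'+b_m+xb_m'$ and $\tfrac{\mathrm{d}}{\mathrm{d}x}(a_m+b_m)=a_m'+b_m'$, and then collect, in $Q_{m+1}-x\widehat{Q}_{m+1}$ and in $Q_{m+1}-\widehat{Q}_{m+1}$, the four coefficients of $a_m,a_m',b_m,b_m'$. Each coefficient is a small polynomial in $x$ with $m$ as a parameter, and matching it against the corresponding coefficient in the claimed expressions for $(1-x)a_{m+1}$ and $(x-1)b_{m+1}$ reduces to four elementary identities per equation; for instance the coefficient of $a_m$ in $Q_{m+1}-x\widehat{Q}_{m+1}$ comes out to $-(2m+2)x^2+2mx+(4m+2)$, which equals $-2(1+x)(mx-2m+x-1)$.

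The main obstacle is getting the companion recurrence for $\widehat{Q}_m$ exactly right: reversal conjugates the first–order operator in \eqref{Qmx-recu} into another first–order operator, and the sign in front of the derivative term flips (from $-2x(1+x)\tfrac{\mathrm{d}}{\mathrm{d}x}$ to $+2x(1+x)\tfrac{\mathrm{d}}{\mathrm{d}x}$), so a single slip there corrupts both final identities at once. After this is secured, the remainder is routine coefficient bookkeeping, which I would verify term by term and sanity-check against the tabulated $a_m,b_m$ for $m\leqslant 4$.
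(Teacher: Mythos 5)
Your proposal is correct and follows essentially the same route as the paper: it introduces the reversed polynomial $\widetilde{Q}_m(x)=x^mQ_m(1/x)=a_m(x)+b_m(x)$, derives the same companion recurrence $\widetilde{Q}_{m+1}=(2mx+4m+2x+3)\widetilde{Q}_m+2x(1+x)\widetilde{Q}_m'$, and obtains the two identities by collecting coefficients in $Q_{m+1}-x\widetilde{Q}_{m+1}$ and $Q_{m+1}-\widetilde{Q}_{m+1}$. The coefficient computations you sketch (e.g.\ $-(2m+2)x^2+2mx+4m+2=-2(1+x)(mx-2m+x-1)$) check out, so no changes are needed.
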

\begin{proof}
Note that
$x^m Q_m(1/x)=x^m a_m(1/x)+x^{m-1} b_m(1/x)=a_m(x)+b_m(x)$.
Setting $\widetilde{Q}_m(x)=x^m Q_m(1/x)$,
it is easy to verify that
\begin{equation}\label{Qmx-recu02}
\widetilde{Q}_{m+1}(x)=(2mx+4m+2x+3)\widetilde{Q}_{m}(x)+2x(1+x)\frac{\mathrm{d}}{\mathrm{d}x}\widetilde{Q}_{m}(x).
\end{equation}
Using~\eqref{Qmx-recu} and~\eqref{Qmx-recu02}, $Q_m(x)=a_m(x)+xb_m(x)$ and $\widetilde{Q}_m(x)=a_m(x)+b_m(x)$ can be rewritten as
\begin{align*}
a_{m+1}(x) + x b_{m+1}(x)
&=(2m+1)(2+3x) a_m(x)
+(6mx + 4m + x)x  b_m(x)\\
& \quad -2x (1+x)\left [\frac{\mathrm{d}}{\mathrm{d}x}a_m(x) + x\frac{\mathrm{d}}{\mathrm{d}x} b_m(x)\right],
\end{align*}
\begin{align*}
a_{m+1}(x) + b_{m+1}(x)
&=( 2mx + 4m + 2x + 3 ) a_m(x)
+( 2mx + 4m + 2x + 3  )  b_m(x)\\
& \quad +2x(1+x)\left [\frac{\mathrm{d}}{\mathrm{d}x}a_m(x) + \frac{\mathrm{d}}{\mathrm{d}x} b_m(x)\right].
\end{align*}
In view of $$(1-x) a_{m+1}(x)=a_{m+1}(x) + x b_{m+1}(x)-x\left(a_{m+1}(x) + b_{m+1}(x)\right),$$
$$(x-1) b_{m+1}(x)=a_{m+1}(x) + x b_{m+1}(x)-\left(a_{m+1}(x) + b_{m+1}(x)\right),$$
it is routine to deduce the desired results.
\end{proof}

Recall that
\begin{align*}
a_m(x) &=\sum_{k=0}^{\lrf{m/2}} \alpha_{m,k} x^k (1+x)^{m-2k},~
b_m(x) =\sum_{k=0}^{\lrf{(m-1)/2}} \beta_{m,k}  x^k (1+x)^{m-1-2k}.
\end{align*}
Now we present the recurrence system of the $\gamma$-coefficients of $a_m(x)$ and $b_m(x)$.
\begin{lemma}\label{lemma:alphabeta}
If $m=2t$ even, then
\begin{align*}
\alpha_{m+1,k} &=2 (2m+1-k)\alpha_{m,k}-(4k-1)\beta_{m,k-1}\quad\text{for $1\leqslant k \leqslant t$};\\
\beta_{m+1,k} &=(1  +4k) \alpha_{m,k} + (4m  +2k+3)\beta_{m,k}\quad\text{for $0\leqslant k \leqslant t-1$};\\
\alpha_{m+1,0}&=2(2m+1)\alpha_{m,0 },\beta_{m+1,t}=(1  +4t) \alpha_{m,t}.
\end{align*}

If $m=2t+1$ odd, then
\begin{align*}
\alpha_{m+1,k} &=2 (  2m+1   -k    ) \alpha_{m,k}   - (  4k-1   )  \beta_{m,k-1}\quad\text{for $1\leqslant k \leqslant t$};\\
\alpha_{m+1,0} &=2 (2m+1)   \alpha_{m,0 },~\alpha_{m+1,t+1}=- (3+4t)  \beta_{m,t};\\
\beta_{m+1,k} &=(1+4k) \alpha_{m,k} + (4m  +2k+3)\beta_{m,k}\quad\text{for $0\leqslant k \leqslant t$}.
\end{align*}
\end{lemma}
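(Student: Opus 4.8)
The plan is to pass from the polynomial recurrences of Lemma~\ref{lemma:aabb} to the scalar recurrences for the $\gamma$-coefficients by means of the change of variables $y=-x/(1+x)^2$, which linearizes the $\gamma$-expansion. Writing $w=1+x$, the relations displayed just before the Corollary give $a_m(x)=w^m\alpha_m(y)$ and $b_m(x)=w^{m-1}\beta_m(y)$; differentiating in $x$ with $\mathrm{d}y/\mathrm{d}x=(w-2)/w^{3}$ yields
\begin{align*}
a_m'(x)&=m\,w^{m-1}\alpha_m(y)+(w-2)w^{m-3}\alpha_m'(y),\\
b_m'(x)&=(m-1)w^{m-2}\beta_m(y)+(w-2)w^{m-4}\beta_m'(y),
\end{align*}
where the primes on the right denote differentiation in $y$. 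First I would substitute these four expressions into the two identities of Lemma~\ref{lemma:aabb}.

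The crux is then to eliminate every stray factor $w-1$ and $w-2$ using $y\,w^2=-x=1-w$, equivalently $w-1=-y w^2$, $w-2=-1-y w^2$, $(w-1)(w-2)=y^2w^4+y w^2$, and $(w-1)^2=y^2w^4$. Dividing the first identity by $w^{m+1}$ and the second by $w^{m}$ and making these replacements, I expect each of the four coefficients --- those of $\alpha_m,\alpha_m',\beta_m,\beta_m'$ --- to collapse to a constant or monomial-in-$y$ multiple of the single factor $2-w\;(=1-x)$ in the first identity and $w-2\;(=x-1)$ in the second, matching the prefactor already carried on the left. Cancelling this common factor leaves exactly
\begin{align*}
\alpha_{m+1}(y)&=(4m+2)\alpha_m(y)-2y\alpha_m'(y)+3y\beta_m(y)+4y^2\beta_m'(y),\\
\beta_{m+1}(y)&=\alpha_m(y)+4y\alpha_m'(y)+(4m+3)\beta_m(y)+2y\beta_m'(y),
\end{align*}
i.e. the system \eqref{LnELnO}, here re-derived straight from Lemma~\ref{lemma:aabb} so that the argument does not rely on the Corollary. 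This collapse is the main obstacle: it is not a term-by-term cancellation but depends on the several coefficients conspiring after the substitution, and I would carry out the bookkeeping for the two identities separately.

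Finally I would compare coefficients of $y^k$, using $\alpha_m(y)=\sum_k(-1)^k\alpha_{m,k}y^k$ and $\beta_m(y)=\sum_k(-1)^k\beta_{m,k}y^k$: the operator $y\,\tfrac{\mathrm{d}}{\mathrm{d}y}$ multiplies the $k$-th coefficient by $k$ and multiplication by $y$ shifts $k\mapsto k+1$, while the common sign $(-1)^k$ cancels throughout. This turns the first identity into $\alpha_{m+1,k}=2(2m+1-k)\alpha_{m,k}-(4k-1)\beta_{m,k-1}$ and the second into $\beta_{m+1,k}=(1+4k)\alpha_{m,k}+(4m+2k+3)\beta_{m,k}$. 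The rest is bookkeeping of ranges: these interior formulas hold whenever the indices on the right lie inside the respective $\gamma$-vectors, and the boundary cases in the statement --- $\alpha_{m+1,0}=2(2m+1)\alpha_{m,0}$, the top value $\beta_{m+1,t}=(1+4t)\alpha_{m,t}$ when $m=2t$, and $\alpha_{m+1,t+1}=-(3+4t)\beta_{m,t}$ when $m=2t+1$ --- are precisely the instances in which a referenced coefficient falls outside its range and is therefore $0$. The even/odd split merely records that the $\gamma$-vector of $a_m$ runs to $k=\lrf{m/2}$ while that of $b_m$ runs only to $k=\lrf{(m-1)/2}$, so the parity of $m$ decides which index first leaves its range.
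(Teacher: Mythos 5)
Your proposal is correct; I checked that the claimed ``collapse'' does occur. After substituting $a_m(x)=w^m\alpha_m(y)$, $b_m(x)=w^{m-1}\beta_m(y)$ with $w=1+x$, $y=-x/w^2$ into Lemma~\ref{lemma:aabb}, dividing the first identity by $w^{m+1}$ and the second by $w^{m}$, and using $yw^2=1-w$, the coefficient of $\alpha_m$ in the first identity becomes $-2\bigl((m+1)w-3m-2\bigr)+2m(1-w)=(4m+2)(2-w)$, that of $\beta_m$ becomes $3y(2-w)$, and similarly for the derivative terms; the analogous factorizations by $(w-2)$ occur in the second identity, so both reduce exactly to the system~\eqref{LnELnO}. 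Your route, however, reverses the paper's logical order: the paper substitutes the $\gamma$-expansions $\sum_k\alpha_{m,k}x^k(1+x)^{m-2k}$ directly into Lemma~\ref{lemma:aabb} and compares coefficients in the basis $x^k(1+x)^{n-2k}$, carrying out the even/odd split inside that computation, and only afterwards deduces~\eqref{LnELnO} as a corollary of Lemma~\ref{lemma:alphabeta}; you instead establish~\eqref{LnELnO} first (independently of the Corollary, as you must to avoid circularity) and then read off the scalar recurrences by comparing coefficients of $y^k$. The two computations are algebraically equivalent, since $x^k(1+x)^{m-2k}=w^m(-y)^k$, but your organization is arguably cleaner: the parity cases degenerate into the trivial observation that a coefficient referenced outside its range vanishes, whereas the paper must track the summation limits separately for $m=2t$ and $m=2t+1$. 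The one piece you leave as an assertion is precisely the factorization computation above; it should be written out, but it does go through.
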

\begin{proof}
By Lemma~\ref{lemma:aabb}, after simplifying, we get
\begin{align*}
&(1-x) \sum_{k=0}^{\lrf{(m+1)/2}} \alpha_{m+1,k} x^k (1+x)^{m+1-2k}\\
&=2(1-x)\sum_{k=0}^{m/2}(2m+1-k)\alpha_{m,k} x^k (1+x)^{m+1-2k}\\
&+(x-1)\sum_{k=0}^{(m-1)/2}(4k+3)\beta_{m,k}x^{k+1} (1+x)^{m-1-2k},
\end{align*}
which yields
\begin{align*}
&\sum_{k=0}^{\lrf{(m+1)/2}} \alpha_{m+1,k} x^k (1+x)^{m+1-2k}\\
&=2\sum_{k=0}^{\lrf{m/2}}(2m+1-k)\alpha_{m,k} x^k (1+x)^{m+1-2k}-\sum_{k=1}^{\lrf{(m+1)/2}}(4k-1)\beta_{m,k-1}x^{k} (1+x)^{m+1-2k}.
\end{align*}
Then we get $\alpha_{m+1,k}=2(2m+1-k)\alpha_{m,k}-(4k-1)\beta_{m,k-1}$,
where $\beta_{m,-1}=0$.

Similarly, observe that
\begin{align*}
&(x-1)
\sum_{k=0}^{\lrf{m/2}}\beta_{m+1,k}  x^k (1+x)^{m-2k}\\
&=(4mx+x-1) \sum_{k=0}^{\lrf{m/2}} \alpha_{m,k} x^k (1+x)^{m-2k}+ (6mx-4m+x-3) \sum_{k=0}^{\lrf{(m-1)/2}} \beta_{m,k}  x^k (1+x)^{m-2k}\\
& -4\sum_{k=0}^{\lrf{m/2}} \left(k+mx-kx\right) \alpha_{m,k} x^{k} (1+x)^{m-2k}-2\sum_{k=0}^{\lrf{(m-1)/2}} \left(k-kx+mx-x\right) \beta_{m,k}  x^{k} (1+x)^{m-2k}.
\end{align*}
When $m=2t+1$ odd, we obtain
\begin{align*}
&(x-1)
\sum_{k=0}^{t} \beta_{m+1,k}  x^k (1+x)^{m-2k}\\
&=(4m+1) \sum_{k=0}^{t} \alpha_{m,k} x^{k+1} (1+x)^{m-2k}+ (6m+1)\sum_{k=0}^{t} \beta_{m,k}  x^{k+1} (1+x)^{m-2k}\\
& \quad -4\sum_{k=0}^{t} (m-k) \alpha_{m,k} x^{k+1} (1+x)^{m-2k}-2\sum_{k=0}^{t} ( -k+m-1) \beta_{m,k}  x^{k+1} (1+x)^{m-2k}\\
&-\sum_{k=0}^{t} \alpha_{m,k} x^k (1+x)^{m-2k}-(4m+3)\sum_{k=0}^{t} \beta_{m,k}  x^k (1+x)^{m-2k}\\
& -4\sum_{k=0}^{t} k \alpha_{m,k} x^{k} (1+x)^{m-2k} -2\sum_{k=0}^{t}  k \beta_{m,k}  x^{k} (1+x)^{m-2k}.
\end{align*}
For each $0\leqslant k \leqslant t$, by comparing the coefficients of $x^k (1+x)^{m-2k}$, we find that
$$\beta_{m+1,k}=(1+4k)\alpha_{m,k}+(4m+2k+3)\beta_{m,k}.$$

When $m=2t$ even, we find
\begin{align*}
&(x-1)
\sum_{k=0}^{t} \beta_{m+1,k}  x^k (1+x)^{m-2k}\\
&=(4m+1) \sum_{k=0}^{t} \alpha_{m,k} x^{k+1} (1+x)^{m-2k}+ (6m+1)\sum_{k=0}^{t-1} \beta_{m,k}  x^{k+1} (1+x)^{m-2k}\\
& -4\sum_{k=0}^{t} (m-k) \alpha_{m,k} x^{k+1} (1+x)^{m-2k}-2\sum_{k=0}^{t-1} ( -k+m-1) \beta_{m,k}  x^{k+1} (1+x)^{m-2k}\\
&-\sum_{k=0}^{t} \alpha_{m,k} x^k (1+x)^{m-2k}-(4m+3)\sum_{k=0}^{t-1} \beta_{m,k}  x^k (1+x)^{m-2k}\\
&-4\sum_{k=0}^{t} k \alpha_{m,k} x^{k} (1+x)^{m-2k} -2\sum_{k=0}^{t-1}  k \beta_{m,k}  x^{k} (1+x)^{m-2k}.
\end{align*}
For each $0\leqslant k \leqslant t-1$, by comparing the coefficients of $x^k (1+x)^{m-2k}$, we get
$\beta_{m+1,k}=(1+4k)\alpha_{m,k}+(4m+2k+3)\beta_{m,k}$.
For $k=t$, we have $\beta_{m+1,t}=(1+4t)\alpha_{m,t}$.
\end{proof}

\noindent{\bf A proof of Theorem~\ref{thm02}:}
\begin{proof}
Note that $\alpha_{1,0}=2$ and $\beta_{1,0}=1$,
$\alpha_{2,0}=12,\alpha_{2,1}=-3$ and $\beta_{2,0}=9$.
By induction on $m$, we assume that both $(-1)^k\alpha_{m,k} $ and $(-1)^k\beta_{m,k}$ are positive integers.
By Lemma~\ref{lemma:alphabeta}, we obtain
\begin{align*}
\sgn(\alpha_{m+1,k}) &=\sgn\left(\alpha_{m,k})=-\sgn(\beta_{m,k-1}\right),~
\sgn(\beta_{m+1,k})=\sgn(\alpha_{m,k})=\sgn(\beta_{m,k}).
\end{align*}
And so both $(-1)^k\alpha_{m+1,k} $ and $(-1)^k\beta_{m+1,k}$ are positive integers. This completes the proof.
\end{proof}
\section*{Acknowledgements}
The second author was supported by the National Natural Science Foundation of China (Grant number 12071063).


\begin{thebibliography}{37}
\bibitem{Athanasiadis20}
C.A. Athanasiadis, \textit{Binomial Eulerian polynomials for colored permutations}, J. Combin. Theory Ser. A, \textbf{173} (2020), 105214.


\bibitem{Beck2019}
M. Beck, K. Jochemko and E. McCullough, \textit{$ h^\ast$-polynomials of zonotopes}, Trans. Amer. Math. Soc., \textbf{371} (2019), 2021--2042.

\bibitem{Beck2010}
M. Beck, A. Stapledon, \textit{On the log-concavity of Hilbert series of Veronese subrings and Ehrhart series}, Math. Z., \textbf{264} (2010), 195--207.



\bibitem{Boros9901}
G. Boros, V.H. Moll, \textit{An integral hidden in Gradshteyn and Ryzhik}, J. Comput.
Appl. Math., \textbf{106} (1999), 361--368.

\bibitem{Boros9902}
G. Boros, V.H. Moll, \textit{A sequence of unimodal polynomials}, J. Math. Anal.
Appl., \textbf{237} (1999), 272--285.

\bibitem{Boros2001}
G. Boros, V.H. Moll, \textit{The double square root, Jacobi polynomials and Ramanujan's Master Theorem}, J. Comput. Appl. Math., \textbf{130} (2001) 337--344.


\bibitem{Branden18}
P. Br\"and\'{e}n and L. Solus, \textit{Symmetric decompositions and real-rootedness}, Int Math. Res Notices, rnz059 (2019), https://doi.org/10.1093/imrn/rnz059.


\bibitem{Chen09}
W.Y.C. Chen, E.X.W. Xia, \textit{The ratio monotonicity of the Boros-Moll polynomials}, Math. Comput., \textbf{78} (2009), 2269--2282.

\bibitem{Chen10}
W.Y.C. Chen, S.X.M. Pang, E.X.Y. Qu, \textit{On the combinatorics of the Boros-Moll polynomials}, Ramanujan J., \textbf{21}, (2010) 41--51.



\bibitem{Branden18}
P. Br\"and\'{e}n and L. Solus, \textit{Symmetric decompositions and real-rootedness}, Int Math. Res Notices, rnz059, https://doi.org/10.1093/imrn/rnz059, 2019.


\bibitem{Gal05}
S.R. Gal, \textit{Real root conjecture fails for five and higher-dimensional spheres}, Discrete Comput. Geom., \textbf{34} (2005), 269--284.


\bibitem{Kausers07}
M. Kausers, P. Paule, \textit{A computer proof of Moll's log-concavity conjecture},
Proc. Amer. Math. Soc., \textbf{135}(12) (2007) 3847--3856.


\bibitem{Schepers13}
J. Schepers and L.V. Langenhoven, \textit{Unimodality questions for integrally closed lattice polytopes}, Ann.
Combin., \textbf{17}(3) (2013), 571--589.

\bibitem{Sloane}
{\sc N.J.A. Sloane}, {\em The On-Line Encyclopedia of Integer Sequences},
https://oeis.org.
\end{thebibliography}
\end{document}